\documentclass[a4paper]{amsart}

\usepackage{amsmath}
\usepackage{amsthm}
\usepackage{amssymb}
\usepackage{graphicx}
\usepackage{url}

\frenchspacing

\numberwithin{equation}{section} \swapnumbers

\newcommand{\Lip}{\operatorname{Lip}}
\newcommand{\Fl}{\operatorname{Fl}}

\theoremstyle{plain}
\newtheorem{theorem}{Theorem}[section]

\newtheorem{corollary}[theorem]{Corollary}

\theoremstyle{definition}
\newtheorem{definition}[theorem]{Definition}

\newtheorem{remark}[theorem]{Remark}

\begin{document}

\title[Another approach to some R- and SPDEs]{Another approach to some rough and stochastic partial differential equations}

\author{Josef Teichmann}

\address{ETH Z\"urich, D-MATH, R\"amistrasse 101, 8092 Z\"urich, Switzerland}
\email{jteichma@,math.ethz.ch}
\thanks{The author gratefully acknowledges the support from the FWF-grant Y328}

\begin{abstract}
In this note we introduce a new approach to rough and stochastic partial differential equations (RPDEs and SPDEs): we consider general Banach spaces as state spaces and -- for the sake of simiplicity -- finite dimensional sources of noise, either rough or stochastic. By means of a time-dependent transformation of state space and rough path theory we are able to construct unique solutions of the respective R- and SPDEs. As a consequence of our construction we can apply the pool of results of rough path theory, in particular we obtain strong and weak numerical schemes of high order converging to the solution process.
\end{abstract}
\keywords{stochastic partial differential equation, strongly continuous semigroup, Banach space, rough paths}
\maketitle

\section{Introduction}
\label{sec:introduction}

There are several, recent and deep approaches to prove existence and uniqueness for stochastic partial differential equations (SPDEs) and rough partial differential equations (RPDEs). We quote pars pro toto \cite{dap/zab92} and \cite{carfriobe09} for the semigroup approach to SPDEs, and for a new deep approach to rough perturbations of fully non-linear PDEs, respectively. What we present in this short note is an alternative approach to solution theory of SPDEs, which can be extended to RPDEs, and allows therefore to treat Banach-space valued stochastic differential equations. We focus on the basic idea behind this alternative method called ``method of the moving frame''. We do neither focus on best estimates nor on all possible conclusions which could be drawn out of those results.

Let $ W $ be a state Banach or Hilbert space. Rough and stochastic partial differential equation
\begin{align}
dY_t = (A Y_t + \alpha(Y_t)) dt + \sum_{i=1}^d f_i(Y_t) dX^i_t = (A Y_t + \alpha(Y_t))dt + f(Y_t) dX_t, \, Y_0 \in W
\end{align}
are usually considered as rough or stochastic perturbation of partial differential equations, which are in the present case of a semi-linear type
\begin{align}
dY_t = (AY_t + \alpha(Y_t)) dt, Y_0 \in W.
\end{align}
Often $ A $ is a partial differential operator, whence only semiflows in one time direction can be constructed, and $ A $ is only densely defined on the state Banach or Hilbert space $ W $. Solution concepts of the rough or stochastic partial differential equation are usually developed from the properties of the deterministic system via variation-of-constants approaches. This approach leads (in contrast to finite dimensional stochastic differential equations) to delicate regularity issues, e.g., regularity of stochastic convolutions, smoothness of vector fields for geometric questions, or the pending boundedness issues for vector fields in RPDEs, when it comes to pathwise definitions of solutions. We propose here one approach to overcome these regularity difficulties which allows for pathwise solutions of SPDEs in the sense of rough paths.

Another motivation comes from mathematical finance: Interest rate evolutions can be described by evolutions of yield curves, which are continuous curves on maturities, i.e., $ Y: \mathbb{R}_{\geq 0} \to \mathbb{R} $. From the point of view of economic theory it makes a lot of sense to consider the derivative of yield curves as properly defined objects, and therefore several norms come by itself, for instance weigthed $ W^{1,1}$-norms on yield curves, or $ C_0 $-norms on the derivatives of yield curves. However, in all the literature on yield curve evolutions in a proper Markovian setting, the usual topology is a Hilbert space topology. This comes with the need to perform stochastic integration to understand the underlying stochastic equations, but has no direct economic interpretation at all. This is a second motivation of this work, namely to find a setting for Banach space valued SPDEs, where yield curve evolutions can be treated with respect to economically meaningful topologies. We believe that rough paths is the tool for it.

We therefore propose here a different point of view on SPDEs, called the ``method of the moving frame'', and we prove several existence and uniqueness results. Some results are beyond what existed in the literature so far -- others are in line. The ``method of the moving frame'' is a time-dependent coordinate transformation applied to the stochastic or rough (partial) differential equations. In the moving frame we obtain a transformed equation, which is a time-depedent SDE or RDE due to the choice of the coordinate transform. Usually this method only works if the generator $ A $ generates a group instead of a semigroup. Here a famous theorem from functional analysis, the Sz\H{o}kefalvi-Nagy theorem (see for instance \cite{Dav:76}),  plays an important role, since it allows us -- at least on Hilbert spaces -- to circumvent elegantly the problem that the semigroup does often not extend to the negative real line.

To be more precise on the method of the moving frame let $ W $ be a possibly infinite dimensional Banach space and $ f_0 $ a possible unbounded, (non-)linear generator of a flow $ \Fl $. The vector fields $ f_i $ and the driving (rough) noise $ X $ are usual, for instance $ f_i $ are $ \Lip(\gamma)$ of some order appropriate for the roughness of the signal $ X $. The method of the moving frame for RPDEs
\begin{align}\label{rpde}
dY_t = f_0(Y_t) dt + \sum_{i=1}^d f_i(Y_t) dX^i_t, \, Y_0 = \xi \in W
\end{align}
is then the construction of a deterministic one-to-one equivalence between the solutions of equation \eqref{rpde} and solutions of
\begin{align}
Y_t = \Fl_t (U_t), \quad dU_t = \sum_{i=1}^d D \Fl_{-t} f_i(\Fl_t(U_t)) dX^i_t, \, U_0 = \xi \in W.
\end{align}
We therefore can simply apply a solution theory of ``simple'' \emph{Banach space valued RDEs} of the type
$$
dU_t = \sum_{i=1}^d g_i(t,U_t) dX^i_t, Y_0 = \xi \in W,
$$
with the usual $ \Lip(\gamma) $-properties on the vector fields in order to construct solutions of RPDEs with \textbf{unbounded drift vector fields $f_0$} through the transformation $ Y_t = \Fl_t(U_t) $.

By this method we can treat solutions for the following partially overlapping problems:
\subsection{RPDEs with group generator on Banach spaces} The construction of unique mild solutions of the rough partial differential equations (RPDE)
\begin{align}
dY_t = A Y_t dt + \sum_{i=1}^d f_i(Y_t) dX^i_t = A Y_t dt + f(Y_t) dX_t,
\end{align}
where $ A $ is the generator of a strongly continuous group acting on the (state) Banach space $ W $. The signal $ X $ is a $p$-geometric rough path for some $ p \geq 1 $. As an example we think of the HJM-equation of interest rate theory acting on spaces of forward rate curves: for instance the space of continuous functions which become constant close to infinity.
\subsection{SPDEs with group generator on Banach spaces} The construction of unique mild solutions of the semi-linear stochastic partial differential equation (SPDE)
\begin{align}\label{spde_bs}
dY_t = ( A Y_t + \alpha(Y_t) )dt + \sum_{i=1}^d f_i(Y_t) \circ dB^i_t,
\end{align}
where $ A $ is the generator of a strongly continuous group acting on the (state) Banach space $ W $. Here $ B $ is a $d$-dimensional Brownian motion. As an example we can think of stochastic heat equations or stochastic Schr\"odinger equations on respective Hilbert or Banach spaces.
\subsection{RPDEs with flow generator on Banach spaces} The construction of unique mild solutions of the non-linear RPDE
\begin{align}
dY_t = f_0(Y_t) dt + \sum_{i=1}^d f_i(Y_t) dX^i_t = V^0(Y_t)dt + f(Y_t)dX_t
\end{align}
where $ f_0 $ is the generator of a strongly continuous flow acting on the (state) Banach space $ W $. The signal $ X $ is a $p$-geometric rough path for some $ p \geq 1 $.
\subsection{SPDE or RPDE with (pseudo-contractive) semigroup generator on Hilbert spaces} The construction of unique mild solutions of the semi-linear stochastic partial differential equation (SPDE)
\begin{align}\label{rpde_hs}
dY_t = ( A Y_t + \alpha(Y_t) )dt + \sum_{i=1}^d f_i(Y_t) \circ dX^i_t,
\end{align}
where $ A $ is the generator of a strongly continuous, pseudocontractive semigroup acting on the separable (state) Hilbert space $ H $. Here $B $ is a $d$-dimensional Brownian motion. This works analogously for RPDEs with pseudo-contractive semigroup generator.

The remainder of the article is structured as follows: in Section \ref{sec:basic_concepts} we introduce the basic concepts of rough path theory, in Section \ref{sec:main} we show the main construction and results. In the final Section \ref{sec:examples_and_conclusions} we present some examples and conclusions, including one from interest rate theory.

\section{Basic concepts from rough paths theory}\label{sec:basic_concepts}

From the theory of rough paths we shall mainly apply the universal limit theorem, which we therefore present in this section. In order to formulate the universal limit theorem we need the notion of $ \Lip(\gamma) $-vector fields, which traces back to Elias Stein. We quote the definition from the St.~Flour lecture notes of Terry Lyons \cite{Lyo:06} and the remarks therein. For the purpose of the definiton we choose a compatible norm $ |.| $ on the tensor products of the Banach space $ V $. In some cases we deal with Hilbert spaces, where all compatible norms are equivalent.
\begin{definition}
Let $V$ and $W$ be Banach spaces. Let $ k \geq 0 $ be an integer and $ k < \gamma \leq k+1 $. Let $F$ be a closed subset of $V$ and $ f: F \to W $ a function and let furthermore $ f^j:F \to L(V^{\otimes j},W) $ for $ j=1,\ldots,k$. We denote $ f^0:=f$. The collection $ (f^0,\ldots,f^k) $ belongs to $ \Lip(\gamma)(F,W) $ if there exists a constant $ M \geq 0 $ such that
$$
\sup_{x \in F} |f^j(x)| \leq M
$$
and if there exists functions $ R_j $ such that, for each $ x,u \in F $ and each $ v \in V^{\otimes j} $, we have
$$
f^j(y)(v) = \sum_{l=0}^{k-j} \frac{1}{l!} f^{j+l}(x)(v \otimes {(y-x)}^{\otimes l}) + R_j(x,y)(v)
$$
and
$$
|R_j(x,y)| \leq M {|x-y|}^{\gamma - j}
$$
holds for $ j=0,\ldots,k$. We usually say that $ f $ is $ \Lip(\gamma)(F,W) $ without mentioning $ f^1,\ldots,f^k $ and we call the smallest constant $ M $ for which all described inequalities hold the $ \Lip(\gamma)$-norm of $f$ and denote it by  $ {||f||}_{\Lip{\gamma}} $.
\end{definition}

For the definition and all details on rough paths we refer to the Terry Lyons' St.~Flour lecture notes \cite{Lyo:06}. 
We fix $p \geq 1 $. We understand a $p$-geometric rough paths $X$ as element of the completion of the set of $1$(-geometric) rough paths with respect to the $p$-variation norm (which is in fact a metric). We denote the set of $p$-geometric rough paths with values in $V$ by $ G \Omega_p(V) $. Notice that $p$-geometric rough paths take values in $G_V^m $. Here $ G^m_V $ denotes the nilpotent Lie group of step $m$ over $V$ embedded into the nilpotent algebra $ \mathbb{A}^m_V $ over $V$, which is often refered as truncated (at level $m$) tensor algebra over $V$.

The $p$-variation norm (distance) $d_p$ is given by
$$
d_p(X,Y)= \max_{1 \leq i \leq [p]} \sup_{\mathcal{D} \subset [0,T]} {\bigl( \sum_{\mathcal{D}} {|| X^i_{t_{l+1},t_l} - Y^i_{t_{l+1},t_l} ||}^{\frac{p}{i}} \bigr)}^{\frac{i}{p}}
$$
defined on the set of finite $[p]$-variation, continuous maps from $ \Delta_T := \{(s,t)| \, 0 \leq s \leq t \leq T \} $ into the algebra $ \mathbb{A}^{[p]}_V $ (the truncated tensor algebra $ T^{[p]}(V) $). The supremum is taken with respect to all sub-divisions $ \mathcal{D} $ of $ [0,T] $. The convergence in the $p$-variation norm can also be expressed by control functions $ \omega $, i.e.~ continuous non-negative functions on $ \Delta_T $ which are super-additive in the sense
$$
\omega(s,t) + \omega(t,u) \leq \omega(s,u), \quad \omega(t,t)=0,
$$
for $ 0 \leq s \leq t \leq u \leq T $. Indeed, for a sequence of positive numbers $ a(n) \to 0 $ as $ n \to \infty $ and a control $ \omega $ we define a notion of convergence for sequences $ X(n) $ to $X$,
$$
|| {X(n)}^i_{(s,t)} || \leq {\omega(s,t)}^{\frac{i}{p}}, \quad || {X}^i_{(s,t)} || \leq {\omega(s,t)}^{\frac{i}{p}}, \quad || {X}^i_{(s,t)} - {X(n)}^i_{(s,t)} || \leq a(n) {\omega(s,t)}^{\frac{i}{p}}.
$$
This convergence means that at least a subsequence converges in the $p$-variation norm, and -- vice versa -- every sequence $X(n)$ converging in the $p$-variation distance to $X$ converges in the previously described controlled sense.

Extensions of $p$-geometric rough $X \in G \Omega_p(V) $ paths by time to $p$-geometric rough paths $ \tilde{X} \in G \Omega_p(\mathbb{R} \times V) $ with the corresponding projection property $ \pi_V (\tilde{X}) = X $ are constructed by (Young-)integrating with respect to the $1$-geometric rough path $ t \mapsto t $.

Let $ f \in \Lip(\gamma-1)(W,L(V,W)) $ for $ \gamma > p \geq 1 $ be fixed and take a rough path $ X \in G \Omega_p(V) $. We consider
\begin{equation}\label{rde}
dY_t = f(Y_t) dX_t, \quad Y_0 = \xi \in W,
\end{equation}
and define that the rough differential equation (RDE) given through \eqref{rde} has a solution if there is a rough path $ Z \in G \Omega_p(V \oplus W) $ such that 
\begin{equation}
dZ_t = h(Z_t) dZ_t, \quad Z_0 = 0
\end{equation}
for $ h \in \Lip(\gamma -1)(V \oplus W,L(V \oplus W)) $ with $ h(x,y)(v,w)=(v,f(y+\xi)(w)) $ and 
$$
\pi_V(Z)=X.
$$ 
The point of this reformulation, which is obviously equivalent in a realm of Young integration (which means $ 1 \leq p < 2 $), is that it also makes sense for rough paths. The universal limit theorem then reads as follows:
\begin{theorem}
Let $ p \geq 1 $ and $ \gamma > p $ be fixed. Let $ f \in \Lip(\gamma)(W,L(V,W)) $ be fixed, then for all $ X \in G \Omega_p(V) $ and all $ \xi \in W $ the RDE
$$
dY_t = f(Y_t) dX_t, \quad Y_0 = \xi
$$
admits a unique solution $ Z = (X,Y) \in G \Omega_p(V \oplus W) $ in the previous sense which depends continuously on $ (X,\xi)$, i.e.~the map
$$
G \Omega_p(V) \times W \to G \Omega_p(V \oplus W)
$$
is continuous in the $p$-variation topology (and continuously extends the factorization of the It\^o map to general rough paths). The rough path $ Y $ is the limit, in the $p$-variation norm, of the Picard-iteration sequence $ Y(n) $ of rough paths defined via
$$
d{Z_t(n)} = h({Z_t(n)})d{Z_t(n)}, \quad Y(n)=\pi_W(Z(n))
$$
for $ n \geq 0 $, with $ Y(0) = 0 $. The convergence is geometric, more precisely, for $ \rho > 0 $ and a control $ \omega $ of the $p$-variation of $ X $ there is $ T_{\rho} \in ]0,T] $ such that for all $ 0 \leq s \leq t \leq T_{\rho} $ and $ i=0,\ldots,[p] $
$$
|| Y(n)^i_{(s,t)} - Y(n)^i_{(s,t)} || \leq 2^i \rho^{-n} \frac{\omega(s,t)^{\frac{i}{p}}}{\beta(\frac{i}{p})!},
$$
where $ T_{\rho} $ depends on the $ \Lip(\gamma)$-norm of $ f $.
\end{theorem}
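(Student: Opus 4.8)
The plan is to follow Lyons' fixed-point argument, using precisely the reformulation of the RDE as a fixed point of a rough integration map given above, and to establish a contraction on a sufficiently short time interval, from which geometric convergence, uniqueness and continuity all follow. Throughout I would fix a control $\omega$ dominating the $p$-variation of $X$, extend it to a control for the solution rough path, and work in the metric on rough paths over $V \oplus W$ that is weighted by $\omega$ in the controlled sense recalled in Section~\ref{sec:basic_concepts}.

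First I would construct the rough integral. Given $Z \in G\Omega_p(V \oplus W)$ and the $\Lip(\gamma-1)$ one-form $h$, I would form the candidate increments by Taylor-expanding $h$ along $Z$ to order $[p]$ and contracting against the iterated increments $Z^j_{s,t}$. The key step is to show that the resulting functional on $\Delta_T$ is \emph{almost multiplicative}: its failure to satisfy Chen's relation on $[s,u]$ is bounded by $C\,\omega(s,u)^{\theta}$ for some $\theta > 1$. By Lyons' almost-rough-path extension theorem (every almost-multiplicative functional lies close to a unique genuine multiplicative functional of finite $p$-variation) this associates to the one-form a genuine rough path $\int h(Z)\,dZ \in G\Omega_p(V\oplus W)$, together with the quantitative bound $\| ( \int h(Z)\,dZ )^i_{s,t} \| \le C(\|h\|_{\Lip(\gamma-1)})\,\omega(s,t)^{i/p}/\beta(i/p)!$. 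The factorial weights here originate from the neo-classical inequality of Lyons; establishing these estimates with the correct constants is the technically most delicate part of the whole argument.

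Next I would set up the Picard iteration: with $Z(0)$ the trivial lift and $Z(n+1) = \int h(Z(n))\,dZ(n)$ subject to $\pi_V(Z(n)) = X$, so that $Y(n) = \pi_W(Z(n))$ satisfies the stated recursion. On an interval $[0,T_\rho]$ chosen small enough, depending on $\omega$ and $\|f\|_{\Lip(\gamma)}$ (hence on $\rho$), the same estimates show the map $Z \mapsto \int h(Z)\,dZ$ is a contraction with factor $\rho^{-1} < 1$ in the $\omega$-weighted metric; here one exploits the full $\Lip(\gamma)$ regularity of $f$ to absorb the loss of one degree incurred by the integration map. This yields simultaneously a unique fixed point $Z = (X,Y)$ on $[0,T_\rho]$, the geometric bound $\|Y(n)^i_{s,t} - Y(n-1)^i_{s,t}\| \le 2^i \rho^{-n}\omega(s,t)^{i/p}/\beta(i/p)!$, and local uniqueness. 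Superadditivity of $\omega$ then lets me partition $[0,T]$ into finitely many such subintervals and concatenate the local solutions, matching endpoints via Chen's relation, into a global solution, with global uniqueness following from local uniqueness on each piece.

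Finally, for continuous dependence on $(X,\xi)$, I would run the contraction estimates for two inputs $(X,\xi)$ and $(\tilde X,\tilde\xi)$ in parallel, bounding the $\omega$-weighted distance between the two fixed points by a constant multiple of $d_p(X,\tilde X) + |\xi - \tilde\xi|$ on each subinterval and then summing over the partition. Expressing convergence through the controlled formulation recalled above, this gives continuity of the It\^o--Lyons map $G\Omega_p(V)\times W \to G\Omega_p(V\oplus W)$ in the $p$-variation topology, and shows it extends the classical factorization of the It\^o map from $1$-geometric paths. The principal obstacles are the almost-rough-path extension lemma and the neo-classical factorial estimates of the second paragraph; once those are in hand, the contraction, the geometric convergence rate and the continuity are all variations on a single estimate.
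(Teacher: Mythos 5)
There is an important mismatch of expectations here: the paper contains no proof of this statement at all. It is the universal limit theorem, quoted verbatim as background from Lyons' St.~Flour lecture notes \cite{Lyo:06}; the paper's own contribution (Section \ref{sec:main}) merely \emph{applies} it, via the corollary on time-dependent RDEs, to the transformed equation \eqref{rde_bs-valued_transformed}. So your proposal cannot be compared against a proof in the paper --- it can only be assessed as a reconstruction of Lyons' argument from the literature.

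As such a reconstruction, your outline is faithful and correctly structured: rough integration of a $\Lip(\gamma-1)$ one-form defined through an almost-multiplicative functional and Lyons' extension theorem; the neo-classical inequality producing the factorial weights $\omega(s,t)^{i/p}/\beta(\frac{i}{p})!$; Picard iteration made geometrically convergent (with factor $\rho^{-1}$ after shrinking to $[0,T_\rho]$ depending on ${\|f\|}_{\Lip \gamma}$) by exploiting the full $\Lip(\gamma)$ regularity for the difference estimates; globalization by superadditivity of the control; and continuity of the It\^o--Lyons map by running the same estimates for two inputs in parallel. You also silently corrected the typo in the stated convergence bound (which should compare $Y(n)$ with $Y(n-1)$, or $Y$ with $Y(n)$). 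The honest caveat is that, as a proof, your text is a scaffold: the two ingredients you yourself flag as the delicate part --- the almost-rough-path extension lemma with quantitative bounds, and the neo-classical factorial estimates --- are invoked rather than established, and they carry essentially all of the analytic content of the theorem. A second, smaller gap: uniqueness does not follow purely formally from ``contraction gives a unique fixed point,'' since an arbitrary solution in the sense defined above is not a priori in the class where your weighted metric is finite; Lyons' argument compares a given solution against the Picard iterates directly. Within the role this theorem plays in the paper --- a quoted black box --- none of this matters, but if your goal were a self-contained proof, those are the steps still to be done.
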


From the universal limit theorem we can draw one important conclusion, which is likely to have too strong assumptions but will be sufficient for our conceptual framework. An improved version can be found in \cite{LejVic:04}. We consider the time-dependent RDE
\begin{equation}\label{rde_time-dependent}
dY_t = f(t,Y_t)dX_t,
\end{equation}
which is equivalent to the time-dependent RDE
\begin{equation}\label{rde_time-dependent_reformulation}
ds_t = dt, \quad dY_t = f(s_t,Y_t) dX_t, \quad Y_0 = \xi \in W, \quad s_0 = s_0 \in \mathbb{R},
\end{equation}
in the realm of Young integration. The latter equation can be considered as a (time-homogenous) RDE, where we have just stated the universal limit theorem. Therefore the universal limit theorem for time-dependent RDEs reads as follows.
\begin{corollary}
Let $ p \geq 1 $ and $ \gamma > p $ be fixed and $ f(t,y) \in L(V,W) $ a family of endomorphisms for $ t \in [0,T] $ and $ y \in W $. Let's assume $ \tilde{f} \in \Lip(\gamma)([0,T] \times W,L(\mathbb{R} \times V,\mathbb{R} \times W)) $ for the extension
$$
\tilde{f}(s,y)(r,v)= (r,f(s,y)(v)),
$$
and let $ \tilde{X} $ denote the time-extended $p$-geometric rough path in $ G \Omega_p(\mathbb{R} \times V) $ extending $ X $. Then for all $ X \in G \Omega_p(V) $ and all $ \xi \in W $ the RDE
$$
d\tilde{Y}_t = \tilde{f}(\tilde{Y}_t) d\tilde{X}_t, \quad Y_0 = \xi
$$
admits a unique solution $ \tilde{Z} = (\tilde{X},\tilde{Y}) \in G \Omega_p(\mathbb{R} \times V \oplus \mathbb{R} \times W) $ in the previous sense which depends continuously on $ (X,\xi)$, i.e.~the map
$$
G \Omega_p(V) \times W \to G \Omega_p(V \oplus W)
$$
is continuous in the $p$-variation topology (and continuously extends the It\^o map). The corresponding statements on convergence of the Picard iterations also hold true.
\end{corollary}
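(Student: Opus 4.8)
The plan is to apply the universal limit theorem (the preceding Theorem) essentially verbatim to the reformulated system \eqref{rde_time-dependent_reformulation}, which by construction is a time-homogeneous RDE. First I would record that the augmented driver $\tilde{X} \in G\Omega_p(\mathbb{R} \times V)$ is precisely the time-extension of $X$ introduced above, obtained by (Young-)integrating against the $1$-geometric rough path $t \mapsto t$; it lives in the correct space $G\Omega_p(\mathbb{R} \times V)$ and satisfies $\pi_V(\tilde{X}) = X$. With this identification, \eqref{rde_time-dependent_reformulation} is literally the RDE $d\tilde{Y}_t = \tilde{f}(\tilde{Y}_t)\, d\tilde{X}_t$ with initial value $\tilde{Y}_0 = (s_0, \xi) \in \mathbb{R} \times W$, where by the shape $\tilde{f}(s,y)(r,v) = (r, f(s,y)(v))$ the first component of $\tilde{Y}$ simply integrates $dt$ and hence equals $s_0 + t$, feeding the running time back into $f$ in the remaining $W$-component.

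Second, since $\tilde{f} \in \Lip(\gamma)([0,T] \times W, L(\mathbb{R}\times V, \mathbb{R}\times W))$ is assumed and $\gamma > p$, the hypotheses of the universal limit theorem are met with $V$ replaced by $\mathbb{R} \times V$ and $W$ replaced by $\mathbb{R} \times W$. The theorem then yields a unique solution $\tilde{Z} = (\tilde{X}, \tilde{Y}) \in G\Omega_p((\mathbb{R}\times V) \oplus (\mathbb{R}\times W))$ with $\pi_{\mathbb{R}\times V}(\tilde{Z}) = \tilde{X}$, together with the geometric convergence of the associated Picard iterates, all of which transfer without modification.

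Third, for the continuity assertion I would compose two continuous maps. The time-extension $X \mapsto \tilde{X}$ is continuous in the $p$-variation topology, being given by Young integration against a fixed path; and the It\^o map $(\tilde{X}, (s_0,\xi)) \mapsto \tilde{Z}$ furnished by the theorem is continuous. Hence $(X,\xi) \mapsto \tilde{Z}$, and after the relevant projection the time-dependent It\^o map, is continuous in the $p$-variation topology, and the stated convergence of Picard iterations follows likewise.

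The only point requiring genuine care — rather than an obstacle in the usual sense — is the passage from the Young-integration equivalence of \eqref{rde_time-dependent} and \eqref{rde_time-dependent_reformulation} to the rough-path regime: in the rough setting the time-dependent equation \eqref{rde_time-dependent} carries no a priori meaning independent of its reformulation, so one must adopt \eqref{rde_time-dependent_reformulation} as the \emph{definition} of a solution of \eqref{rde_time-dependent}. Once this definitional stance is fixed, there is nothing further to prove beyond reading off the conclusions of the Theorem for the augmented data $(\tilde{f}, \tilde{X})$.
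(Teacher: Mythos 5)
Your proposal follows exactly the paper's route: adopt the augmented, time-homogeneous reformulation \eqref{rde_time-dependent_reformulation} as the definition of a solution of \eqref{rde_time-dependent}, identify the driver with the time-extended rough path $\tilde{X}$, and read off existence, uniqueness, Picard convergence and continuity of the It\^o map from the universal limit theorem applied to the data $(\tilde f,\tilde X)$. The continuity-by-composition argument (continuity of $X\mapsto\tilde X$ followed by continuity of the It\^o map) is also the intended one.

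There is, however, one concrete step you skip over, and it is precisely the step the paper isolates in the remark following the corollary. You assert that ``the hypotheses of the universal limit theorem are met with $V$ replaced by $\mathbb{R}\times V$ and $W$ replaced by $\mathbb{R}\times W$.'' They are not, literally: the universal limit theorem as stated requires the vector field to be $\Lip(\gamma)$ on the \emph{whole} state space, which after augmentation is $\mathbb{R}\times W$, whereas the hypothesis only provides $\tilde f\in\Lip(\gamma)([0,T]\times W, L(\mathbb{R}\times V,\mathbb{R}\times W))$, i.e.~$\Lip(\gamma)$ on a proper closed subset. The missing ingredient is Whitney's (Stein's) extension theorem for $\Lip(\gamma)$ functions on closed sets, which is exactly what the definition of $\Lip(\gamma)(F,W)$ for closed $F$ quoted in Section \ref{sec:basic_concepts} is set up for: one extends $\tilde f$ to a $\Lip(\gamma)$ vector field on all of $\mathbb{R}\times W$ (in particular in the time variable to the whole real line), applies the universal limit theorem to the extension, and then restricts the conclusion to $[0,T]$; since the time component of the augmented solution is $s_0+t$ and hence never leaves $[0,T]$, the solution only sees the original $\tilde f$, so existence, uniqueness and the Picard estimates are unaffected by the choice of extension. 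With that one paragraph added, your argument coincides with the paper's.
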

\begin{remark}
In order to be in the setting of the universal limit theorem one has to extend $ \tilde{f} $ to the whole real line, which is possible by Whitney's extension theorem (see for instance \cite{Lyo:06} and the references therein). Then we can apply the universal limit theorem and read off the respective result on $ [0,T] $.
\end{remark}

\section{Main Existence and Uniqueness Theorems}\label{sec:main}

We apply the previously introduced theory of rough driving signals in order to solve Banach space valued RDEs in a \emph{mild sense}, a notion, which shall be introduced in the sequel. We shall always consider the following setting:
\begin{itemize}
 \item Let $ X $ be a $p$-geometric rough path for some $ p \geq 1 $.
 \item Let $ W $ be a Banach space together with a strongly continuous group $ P $ with generator $ A $.
 \item Let $ f(y) \in L(V,W) $ be a family of endomorphisms for $ y \in W $.
\end{itemize}
We would like to make sense of solutions of the RPDE
\begin{equation}\label{rde_bs-valued}
dY_t = A Y_t dt + f(Y_t) dX_t, \quad Y_0 = \xi \in W.
\end{equation}
In order to do this we consider the concept of a mild solution, which was successfully applied in \cite{dap/zab92} for Hilbert space valued stochastic differential equation, i.e.,
$$
Y_t = P_t \xi + P_t \int_0^t P_{-s} (f(Y_s) dX_s),
$$
where we already have some hope to make sense of the integral. The main idea (and concept) here is to consider instead of the defining equation for mild solutions the time-dependent RDE
\begin{equation}\label{rde_bs-valued_transformed}
dU_t = P_{-t}(f(P_t \, U_t)dX_t), \quad U_0 = \xi,
\end{equation}
where we do not have the unbounded drift term $A$ anymore. If we are able to solve this equation by a $ p $-geometric rough path $ U $, then $ Y_t := P_t U_t $ is a \emph{mild solution of the equation \eqref{rde_bs-valued} by definition}. We speak of a unique mild solution of equation \ref{rde_bs-valued} if the equation \ref{rde_bs-valued_transformed} has a unique solution. In several cases this might be a slightly too strong notion of uniqueness, but we do not have any better in the moment.

We follow our program and state conditions such that equation \eqref{rde_bs-valued_transformed} has a unique solution by applying the corollary of the universal limit theorem of the introductory Section \ref{sec:basic_concepts}.
\begin{theorem}\label{main-thm}
Let $ p \geq 1 $ and $ \gamma > p $ be fixed and $ f(y) \in L(V,W) $ a family of endomorphisms for $ y \in W $. Let's assume $ \tilde{f} \in \Lip(\gamma)([0,T] \times W,L(\mathbb{R} \times V,\mathbb{R} \times W)) $ for the extension
$$
\tilde{f}(s,y)(r,v)= (r,P_{-s}(f(P_{s}y)(v))),
$$
then there exists a unique solution for
$$
dU_t = P_{-t}(f(P_t \, U_t)dX_t), \quad U_0 = \xi
$$
on compact intervals $ [0,T] $ and the universal limit theorem holds true.
\end{theorem}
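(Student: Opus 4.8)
The plan is to recognise the transformed equation \eqref{rde_bs-valued_transformed} as a time-dependent RDE of the form \eqref{rde_time-dependent} and then to invoke the corollary of the universal limit theorem directly. To this end I would set
$$
g(t,y)(v) := P_{-t}\bigl(f(P_t y)(v)\bigr), \quad t \in [0,T],\ y \in W,\ v \in V,
$$
so that $g(t,y) \in L(V,W)$ is well defined (the group $P$ maps $W$ into $W$, $f(P_t y)\in L(V,W)$, and $P_{-t}$ maps $W$ into $W$), and \eqref{rde_bs-valued_transformed} reads $dU_t = g(t,U_t)\,dX_t$. The extension of $g$ that turns this time-dependent equation into the time-homogeneous RDE \eqref{rde_time-dependent_reformulation} is exactly $\tilde f(s,y)(r,v) = (r,g(s,y)(v)) = (r,P_{-s}(f(P_s y)(v)))$, which is precisely the map $\tilde f$ appearing in the hypothesis. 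Thus the whole content of the statement is that the standing assumption is nothing but the hypothesis of the corollary, phrased for the specific vector field produced by the moving frame.

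Next I would address the domain of definition. The corollary is stated with the time variable ranging over $[0,T]$, whereas the underlying universal limit theorem wants $\Lip(\gamma)$ vector fields on a closed subset of the full modelling space. Following the Remark, I would extend $\tilde f$ from $[0,T]\times W$ to $\mathbb{R}\times W$ by Whitney's extension theorem, preserving a finite $\Lip(\gamma)$-norm, and I would take for $\tilde X \in G\Omega_p(\mathbb{R}\times V)$ the time-extension of $X$ obtained by (Young-)integrating against $t\mapsto t$, so that $\pi_V(\tilde X)=X$.

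With these identifications the hypotheses of the corollary are met verbatim: $\tilde f\in\Lip(\gamma)$ and $\gamma>p$. Applying it yields a unique $\tilde Z=(\tilde X,\tilde Y)\in G\Omega_p(\mathbb{R}\times V \oplus \mathbb{R}\times W)$ solving $d\tilde Y_t = \tilde f(\tilde Y_t)\,d\tilde X_t$ with $\tilde Y_0=\xi$, together with continuity of the associated It\^o map and the geometric convergence of the Picard iterates. Reading off the $W$-component on $[0,T]$, i.e.\ setting $U := \pi_W(\tilde Y)$, produces the unique solution of \eqref{rde_bs-valued_transformed}, and $Y_t := P_t U_t$ is then by definition the mild solution of \eqref{rde_bs-valued}.

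I expect no genuine analytic obstacle in the argument itself, since it is a bookkeeping reduction to the corollary; the only points requiring care are that the reformulation is exact (in particular that $\tilde f$ as defined coincides with the extension making \eqref{rde_time-dependent} and \eqref{rde_time-dependent_reformulation} equivalent) and that the Whitney extension does not destroy the $\Lip(\gamma)$ control. The real difficulty has been displaced into the standing hypothesis: verifying $\tilde f\in\Lip(\gamma)([0,T]\times W,L(\mathbb{R}\times V,\mathbb{R}\times W))$ is delicate precisely because $A$ is unbounded and $P$ is only strongly, not norm, continuous, so that $s\mapsto P_{-s}f(P_s\,\cdot)$ need not be uniformly bounded or regular without structural assumptions relating $f$ to $P$. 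In the present statement this is assumed outright, which is exactly what makes the proof short.
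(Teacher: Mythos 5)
Your proposal is correct and follows exactly the route the paper intends: the paper gives no separate proof of Theorem \ref{main-thm} precisely because it is the corollary of the universal limit theorem applied to the time-dependent vector field $g(t,y)=P_{-t}(f(P_t y)(\cdot))$, whose time-extension is the hypothesized $\tilde f$, with the Whitney extension handling the restriction to $[0,T]$ as in the paper's Remark. Your closing observation that the analytic difficulty is entirely displaced into the assumption $\tilde f\in\Lip(\gamma)$ matches the paper's own remark that this hypothesis is ``best possible'' in this framework.
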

By this simple theorem on Banach space valued RDEs we are able to solve the previous true RPDE through a mild solution.
\begin{theorem}\label{main}
Let $ p \geq 1 $ and $ \gamma > p $ be fixed and $ f(y) \in L(V,W) $ a family of endomorphisms for $ y \in W $. Let's assume $ \tilde{f} \in \Lip(\gamma)([0,T] \times W,L(\mathbb{R} \times V,\mathbb{R} \times W)) $ for the extension
$$
\tilde{f}(s,y)(r,v)= (r,P_{-s}(f(P_{s}y)(v))),
$$
then there is a unique mild solution for the RPDE
$$
dY_t = AY_t dt + f(Y_t) dX_t, \quad Y_0 = \xi
$$
on compact intervals $ [0,T] $.
\end{theorem}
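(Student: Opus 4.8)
The plan is to reduce Theorem \ref{main} entirely to Theorem \ref{main-thm} together with the definition of mild solution fixed just before the statement. The hypothesis imposed here---that the extension $\tilde{f}(s,y)(r,v) = (r, P_{-s}(f(P_{s}y)(v)))$ lies in $\Lip(\gamma)([0,T]\times W, L(\mathbb{R}\times V, \mathbb{R}\times W))$---is verbatim the hypothesis of Theorem \ref{main-thm}. Hence I would first simply invoke Theorem \ref{main-thm} to obtain a unique $p$-geometric rough path $U$ solving the transformed, drift-free equation
$$
dU_t = P_{-t}(f(P_t\,U_t)\,dX_t), \quad U_0 = \xi,
$$
on the compact interval $[0,T]$, together with the associated continuous dependence and Picard-iteration statements of the universal limit theorem.

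Second, I would define $Y_t := P_t U_t$ and check that $Y$ is a mild solution in the declared sense. Because $P$ is a strongly continuous group, each $P_t$ is a bounded linear isomorphism of $W$ with bounded inverse $P_{-t}$, so $t \mapsto P_t U_t$ is again a well-defined $W$-valued rough path and $Y_0 = P_0 U_0 = \xi$. Substituting $P_s U_s = Y_s$ into the integral form $U_t = \xi + \int_0^t P_{-s}(f(P_s U_s)\,dX_s)$ and applying $P_t$ on the left yields exactly the variation-of-constants identity
$$
Y_t = P_t\,\xi + P_t\int_0^t P_{-s}(f(Y_s)\,dX_s),
$$
which is the mild formulation of $dY_t = AY_t\,dt + f(Y_t)\,dX_t$ used in \eqref{rde_bs-valued}.

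Third, for uniqueness I would appeal directly to the convention adopted in the text: a \emph{unique} mild solution is declared to exist precisely when the transformed equation \eqref{rde_bs-valued_transformed} admits a unique solution. Since the map $U \mapsto (t\mapsto P_t U_t)$ is a deterministic, time-dependent bijection between solutions of the transformed equation and mild solutions of the RPDE, with inverse $Y \mapsto (t \mapsto P_{-t}Y_t)$, uniqueness of $U$ furnished by Theorem \ref{main-thm} transfers immediately to uniqueness of $Y$.

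The only point requiring genuine care is the first step, namely that the moving-frame conjugation does not destroy the rough-path structure; but this has already been absorbed into the $\Lip(\gamma)$-hypothesis on $\tilde f$, so that all analytic weight (existence, uniqueness, continuous dependence on $(X,\xi)$) is carried by the corollary of the universal limit theorem. What remains for Theorem \ref{main} is therefore pure bookkeeping of the change of frame, and the main conceptual obstacle---the unboundedness of $A$---has been completely removed by conjugating $f$ with the group $P_{\pm t}$, so that no regularity issue concerning the generator survives into the transformed equation.
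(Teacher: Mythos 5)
Your proposal is correct and follows essentially the same route as the paper: the paper's proof likewise consists of invoking Theorem \ref{main-thm} for the transformed equation and then observing that a (unique) mild solution is, \emph{by the definition adopted in the text}, precisely the $P_t$-transform of the (unique) solution $U$. Your additional verification of the variation-of-constants identity is harmless but not needed, since the paper defines a mild solution directly as $Y_t := P_t U_t$ rather than through that integral identity.
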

\begin{proof}
Since a mild solution is by definition a $ P_t $-transformed solution of
$$
dU_t = P_{-t}(f(P_t U_t)dX_t), \quad U_0 = \xi
$$
the result follows immediately by Theorem \ref{main-thm}.
\end{proof}
\begin{remark}
In this setting -- due to the structure of rough paths theory -- the assertions on $ f $ for obtaining mild solutions are best possible, namely the $ \Lip(\gamma) $-property for the time-transformed vector field $ (t,y;r,v) \mapsto (r,P_{-t} (f(P_ty)(v))) $.
\end{remark}
We can extend the previous considerations even to non-linear drift terms, i.e.~we consider the following setting:
\begin{itemize}
 \item Let $ X $ be a $p$-geometric rough path for some $ p \geq 1 $.
 \item Let $ W $ be a Banach space together with a strongly continuous flow $ \Fl $
such that the first variation $ D \Fl_t $ is a well-defined bounded endomorphism for all $ t $. On a dense domain in $W$ we have
$$
\frac{\partial \Fl(t,y)}{\partial t} = f_0(\Fl(t,y))
$$
for some densely defined (non-linear) vector field $ f_0$.
 \item Let $ f(y) \in L(V,W) $ be a family of endomorphisms for $ y \in W $.
\end{itemize}
We would like to make sense of solutions of the RPDE
\begin{equation}\label{rde_bs-valued_non-linear}
dY_t = f_0( Y_t) dt + f(Y_t) dX_t, \quad Y_0 = \xi \in W.
\end{equation}
In order to do this we consider
\begin{equation}\label{rde_bs-valued_non-linear_transformed}
dU_t = D\Fl_{-t}f(\Fl(t,U_t))dX_t, \quad U_0 = \xi,
\end{equation}
where we do not have the unbounded drift term $f_0$ anymore. If we are able to solve this equation by a $ p $-geometric rough path $ U $, then $ Y_t := \Fl_t (U_t)=\Fl(t,U_t) $ is a \emph{mild solution of the equation \eqref{rde_bs-valued} by definition}. A similar theorem as the previous one holds:
\begin{theorem}
Let $ p \geq 1 $ and $ \gamma > p $ be fixed and $ f(y) \in L(V,W) $ a family of endomorphisms for $ y \in W $. Let's assume $ \tilde{f} \in \Lip(\gamma)([0,T] \times W,L(\mathbb{R} \times V,\mathbb{R} \times W)) $ for the extension
$$
\tilde{f}(s,y)(r,v)= (r,D \Fl_{-s}(f(\Fl(s,y))(v))),
$$
then there is a unique mild solution for the RPDE
$$
dY_t = f_0(Y_t) dt + f(Y_t) dX_t, \quad Y_0 = \xi
$$
on compact intervals $ [0,T] $ constructed via $ Y_t = \Fl(t,U_t) $.
\end{theorem}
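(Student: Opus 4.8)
The plan is to proceed exactly as in the proof of Theorem~\ref{main}, reducing the assertion to the time-dependent universal limit theorem (the corollary of Section~\ref{sec:basic_concepts}) by means of the defining transformation. By definition, a mild solution of \eqref{rde_bs-valued_non-linear} is a path of the form $Y_t = \Fl(t,U_t)$, where $U$ is a $p$-geometric rough path solving the transformed equation \eqref{rde_bs-valued_non-linear_transformed}, that is $dU_t = D\Fl_{-t}\,f(\Fl(t,U_t))\,dX_t$ with $U_0 = \xi$. Consequently it will suffice to produce a unique such $U$ on $[0,T]$ and then to transport existence and uniqueness through the flow map $U \mapsto \Fl(\cdot,U)$.

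First I would read \eqref{rde_bs-valued_non-linear_transformed} as a time-dependent Banach space valued RDE driven by $X$, with vector field $g(t,y) := D\Fl_{-t}\,f(\Fl(t,y)) \in L(V,W)$. Its canonical time-extension, in the sense required by the corollary, is precisely
$$
\tilde f(s,y)(r,v) = (r,\,D\Fl_{-s}(f(\Fl(s,y))(v))),
$$
which is assumed to lie in $\Lip(\gamma)([0,T]\times W,\,L(\R\times V,\R\times W))$. This is exactly the hypothesis of the corollary, so after the Whitney extension of $\tilde f$ to all of $\R \times W$ (as in the remark following the corollary) I would apply it to obtain a unique solution $\tilde Z = (\tilde X, \tilde U) \in G\Omega_p(\R\times V \oplus \R\times W)$ on $[0,T]$, depending continuously on $(X,\xi)$, together with the stated geometric convergence of the Picard iterations. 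Projecting onto the $W$-component then yields the sought unique rough path $U$.

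It remains to set $Y_t := \Fl(t,U_t)$ and to transfer uniqueness. Because $\Fl$ is a strongly continuous flow whose first variation $D\Fl_t$ is a bounded endomorphism for every $t$ with bounded inverse $D\Fl_{-t} = (D\Fl_t)^{-1}$, the assignment $U \mapsto \Fl(\cdot,U)$ is a bijection on rough paths started at $\xi$; hence the uniqueness of $U$ furnished by the corollary is inherited by $Y$, and $Y_t = \Fl(t,U_t)$ is the unique mild solution, as claimed.

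The step carrying the genuine content is the choice of the transformation, which I would motivate by the formal chain rule for the composition of the flow with a rough path: if $U$ solves \eqref{rde_bs-valued_non-linear_transformed} and $Y_t = \Fl(t,U_t)$, then
$$
\begin{aligned}
dY_t &= \partial_t \Fl(t,U_t)\,dt + D\Fl(t,U_t)\,dU_t \\
&= f_0(Y_t)\,dt + D\Fl(t,U_t)\,D\Fl_{-t}\,f(Y_t)\,dX_t = f_0(Y_t)\,dt + f(Y_t)\,dX_t,
\end{aligned}
$$
using $\partial_t \Fl = f_0 \circ \Fl$ on the dense domain and the inverse-function relation $D\Fl(t,U_t)\,D\Fl_{-t} = \mathrm{id}$ at the base point $Y_t$. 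The hard part here is conceptual rather than computational: making this chain rule rigorous pathwise, with $f_0$ only densely defined, is exactly the difficulty that the mild-solution formalism is designed to circumvent. By \emph{defining} a mild solution to be the transformed object $\Fl(\cdot,U)$ rather than a genuine pathwise solution of the original equation, one never has to control the unbounded field $f_0$ along the rough trajectory, and the statement follows directly from the corollary; the entire analytic burden is shifted into the $\Lip(\gamma)$-hypothesis on $\tilde f$, which is verifiable and encodes the interplay of $\Fl$, $D\Fl_{-t}$, and $f$.
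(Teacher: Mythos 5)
Your proposal is correct and follows essentially the same route as the paper: the paper proves this theorem (implicitly, by analogy with Theorem~\ref{main}) by noting that a mild solution is \emph{by definition} the flow-transform $Y_t = \Fl(t,U_t)$ of a solution $U$ of the time-dependent RDE \eqref{rde_bs-valued_non-linear_transformed}, whose unique solvability is exactly the content of the corollary to the universal limit theorem under the stated $\Lip(\gamma)$-hypothesis on $\tilde f$. Your only superfluous step is the bijection argument for transferring uniqueness through $U \mapsto \Fl(\cdot,U)$: since the paper defines uniqueness of the mild solution to mean uniqueness of $U$, no such transfer is needed, as you yourself observe in your closing paragraph.
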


\section{Examples and Conclusions}\label{sec:examples_and_conclusions}

The examples of this section are chosen with increasing complexity: first we consider a Hilbert space setting, where the semigroup is pseudo-contractive and where we apply the Sz\H{o}kefalvi-Nagy theorem. Second we consider SPDEs with values in Banach spaces. Third we return to the main motivation from mathematical finance, the HJM-equation, and fourth we make some remarks on numerical conclusions.

\subsection{RPDEs with pseudo-contractive semigroups on Hilbert spaces}

Let $ H $ be a Hilbert space and $Q$ a pseudo-contractive, strongly continuous semi-group, i.e.~there is $ \omega \in \mathbb{R} $ such that
$$
||Q_t|| \leq \exp(\omega t)
$$
for $ t \geq 0 $. The Sz\H{o}kefalvi-Nagy theorem tells that there exists a Hilbert space $ W $ containing $ H $ as a closed subspace and a strongly continuous group $ P $ on $ W $ extending $ Q $ in the following sense: for $ h \in H $ we have
$$
Q_t h = \pi \, P_t h,
$$
where $ \pi $ is the orthogonal projection of $ W $ on $ H $. This group extension is constructed by considering first $ {(\exp(-\omega t) P_t)}_{t \geq 0} $ and proving a unitary extension of it, then this unitary extension is multiplied by $ \exp(\omega .) $. The idea for this extension stems from harmonic analysis and can be found in \cite{Dav:76}. Notice that this construction is not a marginal case, which can be rarely applied, but works in many second order differential problems (for instance for many semigroups steming from SDEs). Therefore our main assumption on the \emph{group property} of $P$ is in fact not very strong.

Consider now the RPDE
$$
dY_t = AY_t \, dt + f(Y_t)dX_t
$$
for a $p$-geometric rough path $X$. Here $A$ denotes the generator of $ Q $. By extending $ f $ via $ f\circ \pi $ on $W$ we can formulate the $\Lip(\gamma)$-conditions on $ \widetilde{f} $ as in Theorem \ref{main} in order to obtain mild existence and uniqueness for the previous RPDE.

\subsection{SPDEs with values in Banach spaces}

Notice that the Szek\H{o}falvi-Nagy theorem does not hold on Banach spaces, therefore we have to assume the existence of strongly continuous groups for Banach space valued RPDEs.

It is well-known that beyond certain classes of Banach spaces, namely UMD-spaces, there is no appropriate notion of stochastic integration due to the lack of It\^o isometries or equivalent properties. Therefore also SPDEs with values in Banach spaces are a vital area of research with different competing approaches. We want to add here a new approach which works on \emph{any Banach space $W$} under our usual group (or flow) assumptions. 

Rough path theory is here the via regia to circumvent the lack of proper stochastic integration. We denote by $ X $ a Gaussian $p$-rough path in the sense of \cite{fri/vic:09}, i.e. a Gaussian process where the iterated stochastic integrals up to order $ [p] $ exist almost surely with respective finite $ p $-variation (one component can be time $ t $, too). By Theorem \ref{main} we are able to find unique mild solutions of
$$
dY_t = AY_t \,dt + f(Y_t) dX_t,
$$
under the assumption that $ A $ generates a strongly continuous group on $ W $ and that $ \tilde{f} $ satisfies a $ \Lip(\gamma) $-condition on $ [0,T] \times W $.

So far we cannot really say how this stochastic process $ Y $ is related to what we expect from a solution of an SPDE, the following theorem shows that indeed it coincides with the most natural expectations on a solution:
\begin{theorem}
Let $ X $ be given through a $ d $-dimensional Brownian motion $ B $ extended by time $ B^0_t = t $. Let $ l $ be a continuous linear functional on $ W $, then
$$
Y_t = P_tY_0 + \sum_{i=0}^d \int_0^t l(P_{t-s}f_i(Y_s))) \circ dB^i_s
$$
for $ 0 \leq t \leq T $, where $ \circ dB^0_t = dt $. In particular $ s \mapsto l(P_{-s}f_i(Y_s)) $ is a real-valued semi-martingale.
\end{theorem}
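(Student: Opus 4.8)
The identity as displayed is to be read after testing against $l$, i.e.\ as a scalar equation for $l(Y_t)$, and the plan is to derive it entirely at the level of the It\^o--Lyons map and only at the very end to re-interpret the rough integrals as Stratonovich integrals. Recall that the mild solution is $Y_t=P_tU_t$, where $U\in G\Omega_p(V)$ is the rough solution of $dU_t=g(t,U_t)\,dX_t$ produced by Theorem \ref{main-thm}, with $g(s,u)=P_{-s}\bigl(f(P_su)\bigr)$ and components $g_i(s,u)=P_{-s}\bigl(f_i(P_su)\bigr)\in W$; here the index $i=0$ carries the time direction $X^0_t=t$. In increment form the $W$-component of the solution satisfies $U_t-\xi=\int_0^t g(s,U_s)\,dX_s$. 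The one structural fact I would use is that a fixed bounded linear map commutes with rough integration and with the solution map, so that applying any fixed functional in $W^{*}$ to the above yields the corresponding scalar rough integral.

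First I would fix the terminal time $t$ and apply the bounded functional $l\circ P_t\in W^{*}$, which does not depend on the integration variable $s$, to the increment identity. Commuting it through the rough integral gives
$$
(l\circ P_t)(U_t-\xi)=\sum_{i=0}^d\int_0^t (l\circ P_t)\bigl(P_{-s}f_i(P_sU_s)\bigr)\,dX^i_s .
$$
Now I would use the group law $P_tP_{-s}=P_{t-s}$ together with $P_sU_s=Y_s$ and $P_tU_t=Y_t$: the left-hand side equals $l(Y_t)-l(P_t\xi)=l(Y_t)-l(P_tY_0)$, and each integrand equals $l\bigl(P_{t-s}f_i(Y_s)\bigr)$. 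This is precisely the asserted representation. Since $X$ is the canonical (Stratonovich) lift of $(t,B^1,\dots,B^d)$, the consistency between geometric rough paths over Brownian motion and Stratonovich calculus lets me replace each $dX^i_s$ by $\circ\,dB^i_s$, with $\circ\,dB^0_s=ds$, completing the formula.

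For the final assertion I would note that $l\bigl(P_{-s}f_i(Y_s)\bigr)=(l\circ g_i)(s,U_s)$, and that by the standing hypothesis $\tilde f\in\Lip(\gamma)([0,T]\times W,\cdot)$ the map $(s,u)\mapsto(l\circ g_i)(s,u)$ is jointly $\Lip(\gamma)$, hence smooth enough to apply the It\^o--Stratonovich formula to the semimartingale $U$, which exhibits $s\mapsto l(P_{-s}f_i(Y_s))$ as a real-valued continuous semimartingale. The main obstacle is exactly the passage used twice above: rigorously identifying the $W$-valued rough solution with a genuine Stratonovich solution and thereby justifying that $U$ ``is a semimartingale'' in a possibly non-UMD Banach space, where no intrinsic stochastic integration is available. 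I expect the clean route to avoid Banach-valued stochastic integration altogether: establish the scalar identity for piecewise-linear approximations $X(n)$ of the signal, where all integrals are classical Riemann--Stieltjes or Stratonovich integrals and the commutation with $l\circ P_t$ is elementary, and then pass to the limit using the $p$-variation continuity of the solution map from the universal limit theorem, so that the integrals in the statement are understood in the rough (equivalently, for Brownian $X$, Stratonovich) sense.
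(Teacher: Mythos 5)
Your proposal is correct and, in its decisive step, coincides with the paper's own argument: the paper likewise proves the identity by taking piecewise-linear interpolations $X(n)$ of the Brownian signal, using the universal limit theorem to get convergence $U(n)\to U$ in $p$-variation, and invoking Wong--Zakai to identify the limiting integrals as Stratonovich integrals, after which testing against the functional (your $l\circ P_t$, together with the group law $P_tP_{-s}=P_{t-s}$) yields the stated formula. Your preliminary direct manipulation at the rough-path level and your honest flagging of the rough-versus-Stratonovich identification are fine, and the resolution you propose is exactly the paper's proof.
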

\begin{proof}
For the proof we observe again that $ Y_t = P_t U_t $ for
$$
U_t = U_0 + \int_0^t P_{-s} (f(P_s U_s) dX_s)
$$
in the sense of rough paths. By the universal limit theorem we know that the piecewise linear interpolations of Brownian motion converge on the one hand to the time-extended Brownian rough path, we codify this convergence in $ X(n) \to X $ as $ n \to \infty $. On the other hand -- by stochastic analysis -- we recognize that the integrals along piecewise interpolated Brownian motions converge to the Stratonovich integral of the limit, hence
$$
l(U(n)_t) = l(U_0) + \int_0^tl(P_{-s}f(P_s U(n)_s) dX(n)_s \to l(U_0) + \sum_{i=0}^d \int_0^tl(P_{-s}f_i(P_s U_s) \circ B^i_s
$$
as $ n \to \infty $ holds true.
\end{proof}
\begin{remark}
Similar theorems can be proved for Gaussian rough paths.
\end{remark}

\subsection{HJM equation along rough paths}
Interest rate evolutions can be described by evolutions of yields, which are continuous real-valued curves on times to maturity, i.e., $ Y: \mathbb{R}_{\geq 0} \to \mathbb{R} $ continuous. From the point of view of economic theory it makes a lot of sense to consider the derivative of yield curves as properly defined and therefore several norms come by itself, for instance $ W^{1,1}$-norms on such curves, or $ C_0 $-norms on the derivatives of such curves. The previous construction allows to consider stochastic differential equations with values in such Banach spaces.

More precisely let us consider interest rate evolutions along (Brownian) rough paths with values in a Banach space $W$ of measurable curves on the set of real numbers $\mathbb{R}$, whose restriction on $ \mathbb{R}_{\geq 0} $ corresponds to forward curves (which in turn are closely related to derivatives of yield curves). On this Banach space we consider the shift group $ P_t f(x) = f(x+t) $ and assume it to be strongly continuous thereon. With respect to a martingale measure this equation should read as follows
$$
dr_t = (\frac{d}{dx} r_t + \alpha(r_t)) dt + f_i(r_t) dB^i_t, \quad r_0 \in W,
$$
where $ \alpha(r) =  \sum_{i=1}^d \big( \int_0^. f_i (r) \bigr) f_i (r) $ is the HJM-drift term. If we have the appropriate $ \Lip(\gamma) $ condition for $ \gamma > 2 $ on the time transformed vector fields 
$$ 
P_{-t}\alpha \circ P_t ,\, P_{-t}f_1 \circ P_t,\ldots,\, P_{-t}f_d \circ P_t, 
$$
then we can find a unique mild solution (with respect to the chosen Banach space topology) of the HJM-equation by Theorem \ref{main}.

\subsection{Numerical Conclusions} The theory of rough paths allows to construct approximations of solutions of RPDEs together with corresponding convergence rates. There are different results, which can be quoted from the literatur since we have reduced some classes of RPDEs to time-dependent RDEs: 
\begin{itemize}
\item approximation of a $p$-geometric rough path $ X $ by $1$-rough paths $ X(n) $ in the $p$-variation norm leads to convergence of $ U(n) $ to $ U $ in the $p$-variation norm, with explicit rates known. This can be compared to Wong-Zakai schemes from stochastic analysis.
\item Picard iterations as stated in the universal limit theorem converge with explicitly known rates to the respective limit $ U(n) $.
\item $N$-Euler approximations in the sense of \cite{fri/vic:07} can be established for $U$ and yield high order strong approximation schemes.
\item Cubature formulas in the sense of \cite{lyo/vic04} can be established for $U$ and serve as weak approximation schemes with explicitly known rates.
\end{itemize}

\end{document}